\newcommand{\Ktac}{\operatorname{{\bf K}_{tac}}\nolimits}
\newcommand{\Hom}{\operatorname{Hom}\nolimits}
\renewcommand{\Im}{\operatorname{Im}\nolimits}
\newcommand{\Id}{\operatorname{Id}\nolimits}
\renewcommand{\H}{\operatorname{H}\nolimits}
\newcommand{\rank}{\operatorname{rank}\nolimits}
\newcommand{\n}{\operatorname{\mathfrak{n}}\nolimits}
\newcommand{\comments}[1]{}
\newtheorem{theorem}{Theorem}[section]
\newtheorem{corollary}[theorem]{Corollary}
\newtheorem{lemma}[theorem]{Lemma}
\newtheorem{proposition}[theorem]{Proposition}
\theoremstyle{definition}
\newtheorem{discussion}[theorem]{}
\theoremstyle{definition}
\theoremstyle{definition}
\theoremstyle{definition}
\newtheorem*{example}{Example}
\theoremstyle{definition}
\theoremstyle{definition}
\theoremstyle{definition}
\theoremstyle{definition}
\theoremstyle{definition}
\newtheorem*{remark}{Remark}
\theoremstyle{definition}
\newenvironment{smallarray}[1]
 {\null\,\vcenter\bgroup\scriptsize
  \arraycolsep=.13885em
  \hbox\bgroup$\array{@{}#1@{}}}
 {\endarray$\egroup\egroup\,\null}
\begin{document}

\title{A converse to a construction of Eisenbud-Shamash}

\author{Petter A. Bergh, David A.\ Jorgensen \& W. Frank Moore}

\address{Petter Andreas Bergh \\ Institutt for matematiske fag \\
  NTNU \\ N-7491 Trondheim \\ Norway}
\email{bergh@math.ntnu.no}

\address{David A.\ Jorgensen \\ Department of Mathematics \\ University
of Texas at Arlington \\ Arlington \\ TX 76019 \\ USA}
\email{djorgens@uta.edu}

\address{W. Frank Moore \\ Department of Mathematics and Statistics \\ 
Wake Fores University \\ Winston-Salem, \\ NC 27109  \\ USA}
\email{moorewf@wfu.edu}

\date{\today}

\begin{abstract}
Let $(Q,\n,k)$ be a commutative local Noetherian ring, $f_1,\dots, f_c$ a $Q$-regular sequence in $\n$, and 
$R=Q/(f_1,\dots,f_c)$.  Given a complex of finitely generated free $R$-modules, we give a construction of a complex of finitely generated free $Q$-modules having the same homology.  A key application is when the original complex is an $R$-free resolution of a finitely generated $R$-module. In this case our construction is a sort of converse to a construction of Eisenbud-Shamash yielding a free resolution of an $R$-module $M$ over $R$ given one over $Q$.   
\end{abstract}

\subjclass[2010]{13D02, 13D07}

\keywords{Free resolutions, Eisenbud operators, regular sequences, change of rings}

\thanks{Part of this work was done while we were visiting the Mittag-Leffler Institute in February and March 2015. We would like to thank the organizers of the Representation Theory program.}

\maketitle

\section{Introduction}\label{Sec:Intro}

Let $(Q,\n,k)$ be a commutative local Noetherian ring and $R=Q/I$ for some ideal $I$ of $Q$.  Given a free resolution of an $R$-module $M$, can one then describe a free resolution of $M$ over $Q$?  When $I$ is generated by a $Q$-regular sequence $f_1,\dots, f_c$ contained in $\n$, we give a construction that provides a positive answer to this question.  Our construction is a sort of converse of those of Nagata \cite{Nagata} and Eisenbud-Shamash \cite{Eisenbud} in this same context.  

We have recently learned that similar constructions are given by Eisenbud, Peeva and Schreyer in 
\cite[Section 7]{EisenbudPeevaSchreyer} and Burke in \cite{Burke}

We present the construction in Section 2.  Sections 3-5 consist of some applications.

\section{The Construction}\label{Sec:Construction}

Let $(\overline F,\partial^{\overline F})$ be a complex of finitely generated free $R$-modules.  Denote by $(F,\partial^F)$ a lifting to $Q$ of $(\overline F,\partial^{\overline F})$, that is $F$ is a
graded module consisting of free $Q$-modules, $\partial^F$ is an endomorphism of $F$ of degree $-1$, and 
$F\otimes_Q R=\overline F$ with $\partial^F\otimes R=\partial^{\overline F}$.

\begin{discussion} \label{flat}
We denote by $\Hom_{\text gr}(F,F)=\coprod_{n\le 0}\Hom_{\text gr}(F,F)_n$ the nonpositively graded $Q$-module 
of endomorphisms of $F$, where $\Hom_{\text gr}(F,F)_n=\prod_{i\in\mathbb Z}\Hom_Q(F_i,F_{i+n})$. Since $\Hom_{\text gr}(F,F)_n$ is a direct product of flat modules (and $Q$ is Noetherian), a result of Chase \cite{Chase}
tells us that $\Hom_{\text gr}(F,F)_n$ is a flat $Q$-module.  It follows that $\Hom_{\text gr}(F,F)$ is a 
flat $Q$-module.
\end{discussion}

Let $K$ denote the Koszul complex on $f_1,\dots, f_c$, and $\mathscr{B}$ denote a basis of $K$ together with $0$, that is,
\[
\mathscr{B}=\{e_{i_1}\wedge \cdots \wedge e_{i_j}\mid i_1<\cdots < i_j, 1\le j \le c\}\cup\{0,1\}
\]  
We adhere to the convention that $1\wedge\alpha=\alpha\wedge 1=\alpha$, and assign degrees in the usual way:
$|e_{i_1}\wedge \cdots \wedge e_{i_j}|=j$, $|1|=0$; we set the degree of $0$ to be $-1$.

\begin{lemma}\label{lemma}
There exist (graded) endomorphisms $\{t^\alpha\}_{\alpha\in\mathscr{B}}$ of $F$ of degree $-|\alpha|-1$ such that
for all $0\ne \gamma\in\mathscr{B}$ 
\begin{equation}\label{relation}
\sum_{\alpha\wedge\beta=\pm\gamma}(-1)^{|\beta|+(\alpha\beta)}t^\beta t^\alpha
+\sum_{e_i\nmid\gamma}(-1)^{|\gamma|+(e_i\gamma)}f_it^{[e_i\gamma]}=0
\end{equation}
where $(-1)^{(\alpha\beta)}$ is the appropriate sign such that $(-1)^{(\alpha\beta)}\alpha\wedge\beta=\gamma$, 
and $[e_i\gamma]$ is the element in $\mathscr{B}$ equal to $e_i\wedge\gamma$ up to sign.
\end{lemma}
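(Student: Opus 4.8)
The plan is to package the operators into a single element over the Koszul complex and to read \eqref{relation} as a Maurer--Cartan type equation. Write $N=\Hom_{\text{gr}}(F,F)$ for the endomorphism algebra, which is $Q$-flat by Discussion \ref{flat}, let $d$ denote the differential of $K$, and form the differential graded $Q$-algebra $K\otimes_Q N$, with differential $d\otimes 1$ (note $N$ has no differential) and the evident bigrading by Koszul degree and internal degree. To a family $\{t^\alpha\}_{0\ne\alpha\in\mathscr B}$ with $t^\alpha$ of internal degree $-|\alpha|-1$ associate $\theta=\sum_{0\ne\alpha\in\mathscr B}\alpha\otimes t^\alpha\in K\otimes_Q N$, which is homogeneous of total degree $-1$. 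Expanding $(d\otimes 1)(\theta)+\theta^2$ and reading off the coefficient of each basis element $\gamma\in\mathscr B$, one checks --- after matching signs --- that this element of $K\otimes_Q N$ vanishes exactly when \eqref{relation} holds for all $0\ne\gamma\in\mathscr B$; the coefficient of $1\in\mathscr B$ is $(t^1)^2+\sum_i f_i t^{e_i}$, so we build $\theta$ with $t^1=\partial^F$.

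Next I would construct $\theta=\sum_{j\ge 0}\theta_j$ by induction on the Koszul degree $j$, where $\theta_j=\sum_{\alpha\in\mathscr B,\,|\alpha|=j}\alpha\otimes t^\alpha\in K_j\otimes_Q N_{-j-1}$. Put $\theta_0=1\otimes\partial^F$. Assume $\theta_0,\dots,\theta_m$ have been chosen so that the Koszul-degree-$<m$ part of $(d\otimes 1)(\theta)+\theta^2$ vanishes; its Koszul-degree-$m$ part is then an element $\Psi_m\in K_m\otimes_Q N_{-m-2}$ depending only on $\theta_0,\dots,\theta_m$, and the task is to find $\theta_{m+1}\in K_{m+1}\otimes_Q N_{-m-2}$ with $(d\otimes 1)(\theta_{m+1})=-\Psi_m$. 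For $m=0$ this amounts to $(\partial^F)^2\in(f_1,\dots,f_c)N_{-2}$, which holds because $\partial^F\otimes_Q R=\partial^{\overline F}$ forces $(\partial^F)^2$ into $\Ker(N_{-2}\to N_{-2}\otimes_Q R)=(f_1,\dots,f_c)N_{-2}$. For $m\ge 1$ the usual Maurer--Cartan computation --- the Leibniz rule for $d\otimes 1$, associativity of composition in $N$, and the inductive vanishing of the lower-degree parts --- shows that $\Psi_m$ is a cycle, i.e.\ $(d\otimes 1)(\Psi_m)=0$. Since $f_1,\dots,f_c$ is $Q$-regular and $N$ is flat, the complex $K\otimes_Q N$ is acyclic in positive Koszul degrees, so the cycle $\Psi_m$ is a boundary and the required $\theta_{m+1}$ exists; because $K_j=0$ for $j>c$ the induction terminates, and for $j=c$ acyclicity in fact forces $\Psi_c=0$ outright. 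Reading the $t^\alpha$ off the $\theta_j$, and taking $t^0=0$ (it never occurs in \eqref{relation}), completes the construction.

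The step I expect to be the main obstacle is the sign bookkeeping: verifying that the coefficient-wise expansion of $(d\otimes 1)(\theta)+\theta^2$ reproduces \eqref{relation} verbatim --- reconciling the Koszul signs incurred by commuting basis elements of $K$ past the $t^\alpha$ with the signs $(-1)^{(\alpha\beta)}$ and $(-1)^{(e_i\gamma)}$ appearing in \eqref{relation} --- and confirming that the Leibniz-plus-associativity manipulation genuinely yields $(d\otimes 1)(\Psi_m)=0$ and not something off by a sign. A secondary technical point is the base case, which rests on the identifications $N\otimes_Q R\cong\Hom_{\text{gr}}(\overline F,\overline F)$ (so that $(\partial^F)^2\mapsto(\partial^{\overline F})^2=0$) and $\Ker(N\to N\otimes_Q R)=(f_1,\dots,f_c)N$; these in turn use that each $F_i$ is finitely generated free, that $R$ is finitely presented over $Q$ so that $-\otimes_Q R$ commutes with the products defining $N$, and once more the flatness of $N$.
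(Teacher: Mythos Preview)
Your proposal is correct and follows the same architecture as the paper: set $t^1=\partial^F$, induct on Koszul degree, and at each stage show the obstruction lies in the image of the Koszul differential on $K\otimes_Q N$ (the paper works with the equivalent $N\otimes_Q K^\sharp$), using flatness of $N$ and regularity of $f_1,\dots,f_c$. The difference is in how the cycle condition on the obstruction is established. The paper verifies it by a direct and rather lengthy sign computation, pairing terms and checking cancellations by hand; your Maurer--Cartan packaging is a genuine streamlining, since once \eqref{relation} is identified with the vanishing of $(d\otimes 1)(\theta)+\theta^2$, the cycle condition $(d\otimes 1)(\Psi_m)=0$ follows formally from Leibniz and associativity in the DGA $K\otimes_Q N$. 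The trade-off is exactly the obstacle you flag: the sign work is not eliminated but concentrated in the one-time verification that the coefficient of each $\gamma$ in $(d\otimes 1)(\theta)+\theta^2$ matches \eqref{relation}.

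One small discrepancy: the paper sets $t^0=\Id_F$, not $0$. This is immaterial for the lemma as stated, since $t^0$ never occurs in \eqref{relation}, but it matters immediately afterward in the paper, where the perturbed differential on $F\otimes_Q K$ is $\partial=\sum_{\alpha\in\mathscr B} t^\alpha\otimes s_\alpha$ and the summand $t^0\otimes s_0=\Id_F\otimes\partial^K$ is what supplies the Koszul differential.
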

Note that the first sum in \eqref{relation} involves the terms $t^1t^\gamma$ and $t^\gamma t^1$.

\begin{proof}
We begin by setting $t^0=\Id_F$ and $t^1=\partial^F$.  Then $t^\alpha$ for $|\alpha|\ge 1$ will be defined inductively by degree.

Let $K^\sharp$ denote the augmented Koszul complex $0\to K_c \to\cdots\to K_0 \to R\to 0$, which is an exact complex. Since the $Q$-module $\Hom_{\text gr}(F,F)$ of graded $Q$-linear endomorphisms of $F$ of nonpositive
degree is flat, \ref{flat}, the complex $\Hom_{\text gr}(F,F)\otimes_Q K^\sharp$ is exact.

For degree 1, we have $\partial^{\Hom_{\text gr}(F,F)\otimes K^\sharp}(t^1t^1\otimes 1)=t^1t^1\otimes \overline 1$. Since $t^1=\partial^F$, $t^1\otimes \overline 1=\partial^{\overline F}$, and therefore
$t^1t^1\otimes \overline 1=(\partial^{\overline F})^2=0$. By exactness of $\Hom_{\text gr}(F,F)\otimes K^\sharp$ there exist endomorphisms $t^{e_i}$ of $F$ of degree $-2$ such that 
\[
\partial^{\Hom_{\text gr}(F,F)\otimes K^\sharp}\left(-\sum_{i=1}^c(t^{e_i}\otimes e_i)\right)=t^1t^1\otimes 1
\] 
That is,
$-\sum f_it^{e_i}\otimes 1=t^1t^1\otimes 1$, which gives the desired equation $t^1t^1+\sum f_it^{e_i}=0$.  

Now assume that the $t^\gamma$ have been defined, and satisfy \eqref{relation}, for all $\gamma\in\mathscr B$ with $|\gamma|\le d$. We have
\begin{align*}
\partial^{\Hom_{\text gr}(F,F)\otimes K^\sharp}&\left(\sum_{|\gamma|=d}\left(\sum_{\alpha\wedge\beta=\pm\gamma}(-1)^{|\beta|+(\alpha\beta)}t^\beta t^\alpha\right)\otimes\gamma\right)\\
&=\sum_{|\gamma|=d}\left(\sum_{\alpha\wedge\beta=\pm\gamma}(-1)^{|\beta|+(\alpha\beta)}t^\beta t^\alpha\right)\otimes\partial^K(\gamma)\\
&=\sum_{|\gamma|=d-1}\left(\sum_{e_i\nmid\gamma}(-1)^{(e_i\gamma)}f_i\left(\sum_{\alpha\wedge\beta=\pm[e_i\gamma]}(-1)^{|\beta|+(\alpha\beta)}t^\beta t^\alpha\right)\right)\otimes\gamma
\end{align*}
We want to show the above quantity is zero.  For this it suffices to show that for $\gamma\in\mathscr B$ with
$|\gamma|=d-1$,
\[
\sum_{e_i\nmid\gamma}(-1)^{(e_i\gamma)}f_i\left(\sum_{\alpha\wedge\beta=\pm[e_i\gamma]}(-1)^{|\beta|+(\alpha\beta)}t^\beta t^\alpha\right)=0
\]
We have
\begin{equation}\begin{split}\label{q1}
\sum_{e_i\nmid\gamma}(-1)^{(e_i\gamma)}f_i&\left(\sum_{\alpha\wedge\beta=\pm[e_i\gamma]}(-1)^{|\beta|+(\alpha\beta)}t^\beta t^\alpha\right)=\\
&\hskip .5in\sum_{\alpha\wedge\beta=\pm\gamma}\left[t^\alpha\left(\sum_{e_i\nmid\gamma}(-1)^{|\alpha|+([e_i\beta]\alpha)+(e_i\gamma)}f_it^{[e_i\beta]}\right)\right.\\
&\hskip 1in +\left.\left(\sum_{e_i\nmid\gamma}(-1)^{|\beta|+1+(\alpha[e_i\beta])+(e_i\gamma)}f_it^{[e_i\beta]}\right)t^\alpha\right]
\end{split}\end{equation}
From the equalities 
\begin{align*}
[e_i\gamma]&=(-1)^{(e_i\gamma)}e_i\wedge\gamma\\
&=(-1)^{(e_i\gamma)+(\beta\alpha)}e_i\wedge\beta\wedge\alpha\\
&=(-1)^{(e_i\gamma)+(\beta\alpha)+(e_i\beta)}[e_i\beta]\wedge\alpha\\
&=(-1)^{(e_i\gamma)+(\beta\alpha)+(e_i\beta)+([e_i\beta]\alpha)}[e_i\gamma]
\end{align*}
we see that $(-1)^{([e_i\beta]\alpha)+(e_i\gamma)}=(-1)^{(\beta\alpha)+(e_i\beta)}$.
Quantity \eqref{q1} above thus becomes
\begin{equation}\begin{split}\label{q2}
\sum_{\alpha\wedge\beta=\pm\gamma}&\left[t^\alpha\left(\sum_{e_i\nmid\gamma}(-1)^{|\alpha|+(\beta\alpha)+(e_i\beta)}f_it^{[e_i\beta]}\right)\right.\\
&\hskip 1in +\left.\left(\sum_{e_i\nmid\gamma}(-1)^{(|\alpha|+1)(|\beta|+1)+(\beta\alpha)+(e_i\beta)}f_it^{[e_i\beta]}\right) t^\alpha\right]
\end{split}\end{equation}
Since $|[e_i\beta]|\le d$, we have by induction the relations
\[
\sum_{\delta\wedge\epsilon=\pm\beta}(-1)^{|\epsilon|+(\delta\epsilon)}t^\epsilon t^\delta
+\sum_{e_i\nmid\beta}(-1)^{|\beta|+(e_i\beta)}f_it^{[e_i\beta]}=0
\]
which we may rearrange to
\[
\sum_{e_i\nmid\gamma}(-1)^{(e_i\beta)}f_it^{[e_i\beta]}=\sum_{e_i|\gamma\atop e_i\nmid\beta}(-1)^{(e_i\beta)+1}f_it^{[e_i\beta]}+\sum_{\delta\wedge\epsilon=\pm\beta}(-1)^{|\delta|+(\delta\epsilon)+1}t^\epsilon t^\delta
\]
Substituting this expression into \eqref{q2} gives
\begin{equation}\begin{split}\label{q3}
\sum_{\alpha\wedge\beta=\pm\gamma}&\left[t^\alpha\left(\sum_{e_i|\gamma\atop e_i\nmid\beta}(-1)^{|\alpha|+(\beta\alpha)+(e_i\beta)+1}f_it^{[e_i\beta]}\right)\right.\\
&\hskip 1in +\left.\left(\sum_{e_i|\gamma\atop e_i\nmid\beta}(-1)^{(|\alpha|+1)(|\beta|+1)+(\beta\alpha)+(e_i\beta)+1}f_it^{[e_i\beta]}\right) t^\alpha\right]\\
+\sum_{\alpha\wedge\beta=\pm\gamma}&\left[t^\alpha\left(\sum_{\delta\wedge\epsilon=\pm\beta}(-1)^{|\alpha|+(\beta\alpha)+|\delta|+(\delta\epsilon)+1}t^\epsilon t^\delta\right)\right.\\
&\hskip 1in +\left.\left(\sum_{\delta\wedge\epsilon=\pm\beta}(-1)^{(|\alpha|+1)(|\beta|+1)+(\beta\alpha)+|\delta|+(\delta\epsilon)+1}t^\epsilon t^\delta\right) t^\alpha\right]
\end{split}\end{equation}
We will show that the two outer sums in \eqref{q3} are zero. The first outer sum may be broken up as
\begin{equation}\begin{split}\label{q4}
\sum_{\alpha\wedge\beta=\pm\gamma}&t^\alpha\left(\sum_{e_i|\gamma\atop e_i\nmid\beta}(-1)^{|\alpha|+(\beta\alpha)+(e_i\beta)+1}f_it^{[e_i\beta]}\right)\\
&\hskip 1in +\sum_{\alpha\wedge\beta=\pm\gamma}\left(\sum_{e_i|\gamma\atop e_i\nmid\beta}(-1)^{(|\alpha|+1)(|\beta|+1)+(\beta\alpha)+(e_i\beta)+1}f_it^{[e_i\beta]}\right) t^\alpha\\
\end{split}\end{equation}
Consider a term from the first sum:
\begin{equation}\label{q5}
(-1)^{|\alpha|+(\beta\alpha)+(e_i\beta)+1}f_it^\alpha t^{[e_i\beta]}
\end{equation} 
Since $e_i\nmid \beta$ and $e_i |\gamma$, we must have $e_i |\alpha$.  Therefore there is a corresponding term in the second sum:
\begin{equation}\label{q6}
(-1)^{(|\alpha'|+1)(|\beta'|+1)+(\beta'\alpha')+(e_i\beta')+1}f_it^{[e_i\beta']}t^{\alpha'}
\end{equation}
where $\alpha=[e_i\beta']$ and $\alpha'=[e_i\beta]$.  An easy calculation shows that 
$(-1)^{(\beta\alpha)+(e_i\beta)}=(-1)^{(\beta'\alpha')+(e_i\beta')+|\beta'|+|\alpha||\beta|}$.
Substituting this into \eqref{q5}, we see that the two terms in \eqref{q5} and \eqref{q6} are of opposite sign, and therefore cancel.  It follows that the entire quantity \eqref{q4} is zero.

The second outer sum in \eqref{q4} may be broken up as
\begin{equation}\begin{split}\label{q7}
\sum_{\alpha\wedge\beta=\pm\gamma}&t^\alpha\left(\sum_{\delta\wedge\epsilon=\pm\beta}(-1)^{|\alpha|+(\beta\alpha)+|\delta|+(\delta\epsilon)+1}t^\epsilon t^\delta\right)\\
&\hskip 1in +\sum_{\alpha\wedge\beta=\pm\gamma}\left(\sum_{\delta\wedge\epsilon=\pm\beta}(-1)^{(|\alpha|+1)(|\beta|+1)+(\beta\alpha)+|\delta|+(\delta\epsilon)+1}t^\epsilon t^\delta\right) t^\alpha
\end{split}\end{equation}
Consider a term of the first sum:
\begin{equation}\label{q8}
(-1)^{|\alpha|+(\beta\alpha)+|\delta|+(\delta\epsilon)+1}t^\alpha t^\epsilon t^\delta
\end{equation}
There is a corresponding term in the second sum:
\begin{equation}\label{q9}
(-1)^{(|\alpha'|+1)(|\beta'|+1)+(\beta'\alpha')+|\delta'|+(\delta'\epsilon')+1}t^{\epsilon'} t^{\delta'} t^{\alpha'}
\end{equation}
where $\alpha=\epsilon'$, $\epsilon=\delta'$ and $\delta=\alpha'$. An easy calculation shows that
$(-1)^{(\beta'\alpha')+(\delta'\epsilon')}=(-1)^{|\alpha||\delta|+|\delta||\epsilon|+(\beta\alpha)+(\delta\epsilon)}$.  Substituting this into \eqref{q9} shows that the two terms \eqref{q8} and \eqref{q9} are of opposite sign, and therefore cancel.  It follows that the entire quantity \eqref{q7} is zero.

We have now shown that 
\[
\partial^{\Hom_{\text gr}(F,F)\otimes K^\sharp}\left(\sum_{|\gamma|=d}\left(\sum_{\alpha\wedge\beta=\pm\gamma}(-1)^{|\beta|+(\alpha\beta)}t^\beta t^\alpha\right)\otimes\gamma\right)=0
\]
By exactness of $\Hom_{\text gr}(F,F)\otimes K^\sharp$, there exists for each $\mu\in\mathscr B$ with $|\mu|=d+1$,  endomorphisms $t^\mu\in\Hom_{\text gr}(F,F)$ of degree $-d-2$ such that
\[
\partial^{\Hom_{\text gr}(F,F)\otimes K^\sharp}\left((-1)^{d+1}\sum_{|\mu|=d+1}t^\mu\otimes\mu\right)=
\sum_{|\gamma|=d}\left(\sum_{\alpha\wedge\beta=\pm\gamma}(-1)^{|\beta|+(\alpha\beta)}t^\beta t^\alpha\right)\otimes\gamma
\]
It is easy to see that \eqref{relation} holds for each $\gamma\in\mathscr B$, $|\gamma|=d$, and thus induction is complete.
\end{proof}

Consider the complex $F\otimes_Q K$. We perturb its differential 
$\partial^F\otimes K + F\otimes\partial^K$ to 
\[
\partial=\sum_{\alpha\in\mathscr{B}}t^\alpha\otimes s_\alpha
\]
where $t^0=\Id_F$, $t^1=\partial^F$, $s_0=\partial^K$,
$t^\alpha$ are defined by Lemma \ref{lemma} for $\alpha\ne 0$, and $s_\alpha$ is multiplication by $\alpha$
for $\alpha\ne 0$.  

\begin{lemma}
\[
\partial^2=0
\]
\end{lemma}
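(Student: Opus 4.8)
The plan is to expand $\partial^2$ into a sum of terms of the shape $(\text{an endomorphism of }F)\otimes s_\gamma$, $\gamma\in\mathscr{B}$, and to recognize the coefficient of each $s_\gamma$ as the left-hand side of \eqref{relation}, which vanishes by Lemma \ref{lemma}. Write $\partial=A+B$ with $A=\Id_F\otimes\partial^K$ (the summand $t^0\otimes s_0$) and $B=\sum_{0\ne\alpha\in\mathscr{B}}t^\alpha\otimes s_\alpha$ (note that $B$ contains $t^1\otimes s_1=\partial^F\otimes\Id_K$). Then $\partial^2=A^2+(AB+BA)+B^2$, and $A^2=\Id_F\otimes(\partial^K)^2=0$. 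All sums here are finite, since $\mathscr{B}$ is finite and $K$ is bounded, so there is no convergence issue. For the two remaining pieces I would use repeatedly the Koszul sign rule $(\phi\otimes\psi)(\phi'\otimes\psi')=(-1)^{|\psi||\phi'|}(\phi\phi')\otimes(\psi\psi')$ on $F\otimes_Q K$, together with $|s_\alpha|=|\alpha|$ and $|t^\alpha|=-|\alpha|-1$ (recall $|0|=-1$).

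For $AB+BA$ the sign rule gives $AB+BA=\sum_{0\ne\alpha}t^\alpha\otimes\bigl((-1)^{|\alpha|+1}\partial^K s_\alpha+s_\alpha\partial^K\bigr)$. The Leibniz rule on the Koszul complex, $\partial^K\circ s_\alpha=s_{\partial^K\alpha}+(-1)^{|\alpha|}s_\alpha\circ\partial^K$, collapses the parenthesis to $(-1)^{|\alpha|+1}s_{\partial^K\alpha}$, which is $0$ when $\alpha=1$. Writing $\partial^K\alpha=\sum_{e_i\mid\alpha}(-1)^{(e_i\gamma)}f_i\gamma$, where for each $e_i\mid\alpha$ the element $\gamma\in\mathscr{B}$ is the one with $[e_i\gamma]=\alpha$, and then re-indexing by the pairs $(\gamma,e_i)$ with $e_i\nmid\gamma$ (so $\alpha=[e_i\gamma]$ and $|\alpha|=|\gamma|+1$), yields
\[
AB+BA=\sum_{0\ne\gamma\in\mathscr{B}}\Bigl(\sum_{e_i\nmid\gamma}(-1)^{|\gamma|+(e_i\gamma)}f_i\,t^{[e_i\gamma]}\Bigr)\otimes s_\gamma ,
\]
which is the second sum of \eqref{relation} tensored with $s_\gamma$.

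For $B^2$ the sign rule gives $B^2=\sum_{\alpha,\beta\ne 0}(-1)^{|\beta|(|\alpha|+1)}t^\beta t^\alpha\otimes s_\beta s_\alpha$, and $s_\beta s_\alpha$ is multiplication by $\beta\wedge\alpha$: this vanishes unless $\alpha$ and $\beta$ are disjoint, in which case $\beta\wedge\alpha=\varepsilon\,\gamma$ for some $\varepsilon=\pm 1$ and $\gamma\in\mathscr{B}$. Grouping by $\gamma$ and using $\beta\wedge\alpha=(-1)^{|\alpha||\beta|}\alpha\wedge\beta$ together with the defining relation $(-1)^{(\alpha\beta)}\alpha\wedge\beta=\gamma$, one gets $\varepsilon=(-1)^{|\alpha||\beta|+(\alpha\beta)}$, hence $(-1)^{|\beta|(|\alpha|+1)}\varepsilon=(-1)^{|\beta|+(\alpha\beta)}$, so
\[
B^2=\sum_{0\ne\gamma\in\mathscr{B}}\Bigl(\sum_{\alpha\wedge\beta=\pm\gamma}(-1)^{|\beta|+(\alpha\beta)}t^\beta t^\alpha\Bigr)\otimes s_\gamma ,
\]
the first sum of \eqref{relation} tensored with $s_\gamma$. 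Adding the two displays, the coefficient of $s_\gamma$ in $\partial^2$ is exactly the left-hand side of \eqref{relation}, which is zero by Lemma \ref{lemma}; therefore $\partial^2=0$.

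The substance of the argument is entirely sign bookkeeping, and this is where the only real care is needed: keeping the Koszul signs straight in $AB+BA$ and in $B^2$, carrying out the Leibniz collapse correctly, and verifying that rewriting $\beta\wedge\alpha=\pm\gamma$ in terms of $\alpha\wedge\beta=(-1)^{(\alpha\beta)}\gamma$ produces precisely the sign $(-1)^{|\beta|+(\alpha\beta)}$ of \eqref{relation}. The extremal cases are automatically subsumed: for $\gamma=1$ the coefficient is $(\partial^F)^2+\sum_i f_i t^{e_i}$, the degree-$1$ relation from the proof of Lemma \ref{lemma}, and for $|\gamma|=c$ the second sum of \eqref{relation} is empty, matching the fact that $AB+BA$ contributes nothing in that component.
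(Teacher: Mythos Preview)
Your proof is correct and follows essentially the same route as the paper's: both expand $\partial^2$, use the Leibniz rule for $\partial^K$ to obtain the cancellation between the cross terms, and then identify the surviving coefficient of each $s_\gamma$ with the left-hand side of \eqref{relation}. The only difference is organizational: you work at the operator level via the decomposition $\partial=A+B$ and the Koszul sign rule, whereas the paper evaluates $\partial^2(x\otimes y)$ element by element before extracting the same operator identity; the underlying computation and sign bookkeeping are identical.
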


\begin{proof} Let $x\otimes y\in F\otimes_QK$.  Then
\begin{align*} 
\partial^2(x\otimes y)=&\left(\sum_{\beta\in\mathscr{B}}t^\beta\otimes s_\beta\right)\left(\sum_{\alpha\in\mathscr{B}}t^\alpha\otimes s_\alpha\right)(x\otimes y)\\
=&\left(\sum_{\beta\in\mathscr{B}}t^\beta\otimes s_\beta\right)\left(\sum_{\alpha\ne 0}
(-1)^{|x||\alpha|}t^\alpha(x)\otimes \alpha\wedge y+(-1)^{|x|}x\otimes\partial^K(y)\right)\\
=&\left(\sum_{\beta\ne 0}t^\beta\otimes s_\beta+t^0\otimes s_0\right)\left(\sum_{\alpha\ne 0}
(-1)^{|x||\alpha|}t^\alpha(x)\otimes \alpha\wedge y+(-1)^{|x|}x\otimes\partial^K(y)\right)\\
=&\sum_{\alpha,\beta\ne 0}(-1)^{|x|(|\alpha|+|\beta|)+(|\alpha|+1)|\beta|}t^\beta 
t^\alpha(x)\otimes\beta\wedge\alpha\wedge y\\
&+\sum_{\beta\ne 0}(-1)^{|x||\beta|+|x|}t^\beta(x)\otimes \beta\wedge\partial^K(y)\\
&+\sum_{\alpha\ne 0}(-1)^{|x||\alpha|+|x|-|\alpha|-1}
t^\alpha(x)\otimes \partial^K(\alpha\wedge y)\\
\end{align*}
Now using the Leibniz rule $\partial^K(\alpha\wedge y)=
\partial^K(\alpha)\wedge y+(-1)^{|\alpha|}\alpha\wedge\partial^K(y)$ in the third sum, we achieve
cancellation in the second and third sums to obtain
\begin{align*} 
\partial^2(x\otimes y)=&\sum_{\alpha,\beta\ne 0}(-1)^{|x|(|\alpha|+|\beta|)+(|\alpha|+1)|\beta|}t^\beta 
t^\alpha(x)\otimes\beta\wedge\alpha\wedge y\\
&+\sum_{|\alpha|>0}(-1)^{|x||\alpha|+|x|-|\alpha|-1}
t^\alpha(x)\otimes \partial^K(\alpha)\wedge y\\
=&\left(\sum_{\alpha,\beta\ne 0}(-1)^{(|\alpha|+1)|\beta|}t^\beta 
t^\alpha\otimes\beta\wedge\alpha\right.\\
&+\left.\sum_{|\alpha|>0}(-1)^{-|\alpha|-1}
t^\alpha\otimes \partial^K(\alpha)\right)(x\otimes y)\\
\end{align*}
We see then that for $\partial^2(x\otimes y)=0$ for arbitrary $x\otimes y$ we must have 
for all $\gamma\in\mathscr{B}$ 
\begin{align*}
\sum_{\alpha\wedge\beta=\pm\gamma}(-1)^{|\beta|+(\alpha\beta)}t^\beta t^\alpha
+\sum_{e_i\nmid\gamma}(-1)^{|\gamma|+(e_i\gamma)}f_it^{[e_i\gamma]}=0
\end{align*}
where $(-1)^{(\alpha\beta)}\alpha\wedge\beta=\gamma$ and $[e_i\gamma]$ is the element in $\mathscr{B}$
equal to $e_i\gamma$ up to sign.  But this is exactly what we get from Lemma \ref{lemma}.
\end{proof}

\begin{theorem}\label{main} 
The complex $(F\otimes_QK,\partial)$ of finitely generated free $Q$-modules has the same homology as 
$(\overline F,\partial^{\overline F})$.
\end{theorem}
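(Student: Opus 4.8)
Recall first the one external input we need: since $f_1,\dots,f_c$ is a $Q$-regular sequence, the Koszul complex $K$ is a $Q$-free resolution of $R$, so $\H_0(K)=R$ and $\H_j(K)=0$ for $j>0$; let $\epsilon\colon K\to R$ be the augmentation (the quotient $K_0=Q\to R$ in degree $0$, zero in positive degree). The plan is to realize the asserted identification by an honest comparison morphism and then prove it is a quasi-isomorphism by a filtration argument. So the first step is to check that $\mathrm{id}_F\otimes\epsilon\colon (F\otimes_QK,\partial)\to(\overline F,\partial^{\overline F})$ is a morphism of complexes. This is exactly where the hypotheses enter: each summand $t^\alpha\otimes s_\alpha$ with $|\alpha|\ge 1$ strictly raises Koszul degree and is therefore annihilated by $\mathrm{id}_F\otimes\epsilon$; the summand $t^0\otimes s_0=\mathrm{id}_F\otimes\partial^K$ can only reach Koszul degree $0$ out of $K_1$, where it contributes multiplication by some $f_i\in\ker\epsilon$; and the summand $t^1\otimes s_1=\partial^F\otimes\mathrm{id}_K$ reduces, upon applying $-\otimes_QR$ and restricting to Koszul degree $0$, to $\partial^{\overline F}$ by the defining property of a lifting. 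Tracking the signs, this gives $(\mathrm{id}_F\otimes\epsilon)\circ\partial=\partial^{\overline F}\circ(\mathrm{id}_F\otimes\epsilon)$.

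Next I would filter $F\otimes_QK$ by the degree coming from $F$: put $\mathcal F_p=\bigoplus_{i\le p}F_i\otimes_QK$. Every summand of $\partial$ is non-increasing in this degree, since $t^0\otimes s_0$ preserves it, $t^1\otimes s_1$ lowers it by $1$, and $t^\alpha\otimes s_\alpha$ with $|\alpha|=j$ lowers it by $j+1$; hence $\mathcal F_\bullet$ is an increasing filtration by subcomplexes. Because $K$ is concentrated in Koszul degrees $0,\dots,c$, this filtration is finite in each total degree, so the associated spectral sequence converges and the induced filtration on homology is finite in each degree. On the associated graded the differential is $\mathrm{id}_{F_p}\otimes\partial^K$, and since $F_p$ is $Q$-free and $K$ resolves $R$ we get $E^1_{p,q}=F_p\otimes_Q\H_q(K)$, which is $F_p\otimes_QR=\overline F_p$ when $q=0$ and $0$ otherwise. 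Thus $E^1$ occupies the single row $q=0$, so $d^r=0$ for all $r\ge 2$; the differential $d^1$ is induced by the part of $\partial$ that lowers $F$-degree by exactly one, namely $\partial^F\otimes\mathrm{id}_K$, which on $E^1=\overline F$ is precisely $\partial^{\overline F}$ (the operators $t^\alpha$ with $|\alpha|\ge 1$ would only contribute to $d^{|\alpha|+1}$, which vanishes). Therefore $E^\infty=E^2=\H_\ast(\overline F,\partial^{\overline F})$, and finiteness of the filtration yields $\H_\ast(F\otimes_QK,\partial)\cong\H_\ast(\overline F)$. Finally, $\mathrm{id}_F\otimes\epsilon$ carries $\mathcal F_p$ into the brutal filtration $\bigoplus_{i\le p}\overline F_i$ of $\overline F$ and induces on $E^1$ the canonical isomorphism $F_p\otimes_QR\to\overline F_p$; by the comparison theorem for convergent spectral sequences it is a quasi-isomorphism, realizing the identification above.

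The only genuine obstacle I anticipate is the sign bookkeeping in the two verifications: checking that $\mathrm{id}_F\otimes\epsilon$ is a chain map for the perturbed differential $\partial$, and checking that the $E^1$-differential is exactly $\partial^{\overline F}$ with no residual contribution from the higher operators $t^\alpha$. The latter, however, becomes automatic once one observes that $E^1$ lies in a single row, which forces all higher pages and all $d^r$ with $r\ge2$ to vanish; and the former is a routine, if slightly tedious, sign chase using $\partial^F\otimes_QR=\partial^{\overline F}$ and $f_i\in\ker\epsilon$.
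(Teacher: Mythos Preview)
Your argument is correct and follows essentially the same route as the paper: filter $F\otimes_QK$ by the $F$-degree, observe that the associated graded has differential $\mathrm{id}\otimes\partial^K$, and use acyclicity of $K$ over $R$ to collapse the spectral sequence onto the row $q=0$ with $E^2=\H_\ast(\overline F)$. The one addition you make over the paper is the explicit chain map $\mathrm{id}_F\otimes\epsilon$ together with the comparison theorem, which upgrades the paper's abstract isomorphism of homology to an identified quasi-isomorphism; this is a genuine (if modest) improvement, and your verification that the higher operators $t^\alpha$ are killed by $\epsilon$ is correct.
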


\begin{proof}
We filter the complex $F\otimes_QK$ as
\[
\cdots\subset \mathscr F^{p-1} \subset \mathscr F^p \subset \cdots
\]
for $p\in\mathbb Z$, where $\mathscr F^p$ is the subcomplex of $F\otimes_QK$ given by $\mathscr F^p=\coprod_{i\le p}F_i\otimes_Q K$.
We note that since $F\otimes_QK$ is a horizontal band in the upper half plane, this filtration is bounded.  
Therefore we have a convergent spectral sequence $E^2_{p,q} \underset{p}{\Rightarrow} \H_n(F\otimes_QK)$. In
fact, for each $n\in\mathbb Z$, $\mathscr F^{n-c-1}_n=0$ and $\mathscr F^n_n=(F\otimes_QK)_n$.  Thus for the induced 
filtration $(\Phi^p\H_n(F\otimes_QK))$ of $\H_n(F\otimes_QK)$ we have
\[
0=\Phi^{n-c-1}H_n\subseteq \Phi^{n-c}H_n \subseteq \cdots \subseteq \Phi^{n}H_n=\H_n
\]

Since the quotients $\mathscr F^p/\mathscr F^{p-1}$ are isomorphic to $F_p\otimes K$, we see that the $E^1_{p,q}$
terms of the spectral sequence are
\[
E^1_{p,q}\cong\begin{cases} 0 & p>0\\
                        F_q\otimes_QR & p=0
												\end{cases}
\]
subsequently, the spectral sequence collapses on the $p$-axis, and the nonzero $E^2$ terms are given by the homology of $\overline F=F\otimes_QR$.  Specifically,
\[
E^2_{p,q}\cong\begin{cases} 0 & q\ne0\\
                        \H_p(\overline F) & q=0
												\end{cases}
\]	
It follows that 
\[
\H_n(F\otimes_QK)\cong\H_n(\overline F)
\]
and so $F\otimes_QK$ and $\overline F$ have the same homology.						
\end{proof}

\begin{corollary}
Suppose that $(\overline F,\partial^{\overline F})$ is an $R$-free resolution of a finitely generated 
$R$-module $M$.  Then $(F\otimes_QK,\partial)$ is a $Q$-free resolution of $M$.
\end{corollary}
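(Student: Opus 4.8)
The plan is to read the statement off directly from Theorem~\ref{main}. Recall that a $Q$-free resolution of $M$ is, by definition, a complex of finitely generated free $Q$-modules concentrated in nonnegative degrees whose homology is $M$ in degree $0$ and zero elsewhere; so the task is just to check these three features for the complex $(F\otimes_Q K,\partial)$.

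First I would dispose of the formal points. The terms of $F\otimes_Q K$ are finitely generated free $Q$-modules by construction (this is already recorded in Theorem~\ref{main}, and follows from $F$ being degreewise finitely generated free, which we may arrange since $\overline F$ consists of finitely generated free $R$-modules). Since $\overline F$ is a free resolution of $M$ it is concentrated in nonnegative degrees, and then any such lift $F$ is automatically concentrated in nonnegative degrees too: if $\overline F_i=0$ then $F_i/(f_1,\dots,f_c)F_i=0$, so $F_i=0$ by Nakayama, as $(f_1,\dots,f_c)\subseteq\n$. Because $K$ lives in degrees $0,\dots,c$, the complex $F\otimes_Q K$ is therefore concentrated in nonnegative degrees.

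Next comes the homology, which is where all the real content already sits. Theorem~\ref{main} supplies an isomorphism $\H_n(F\otimes_Q K)\cong\H_n(\overline F)$ for every $n$, and since $\overline F$ resolves $M$ the right-hand side is $M$ when $n=0$ and $0$ otherwise. This already proves the corollary. If one wants the augmentation on the nose, one computes the tail of $\partial$: in degree $1$ one has $(F\otimes_Q K)_1=F_1\oplus F_0^{\,c}$, and $\partial_1$ sends $(y;x_1,\dots,x_c)$ to $\partial^F(y)+\sum_i\pm f_ix_i\in F_0=(F\otimes_Q K)_0$, so $\H_0(F\otimes_Q K)=F_0/\bigl(\partial^F(F_1)+(f_1,\dots,f_c)F_0\bigr)=\overline F_0/\partial^{\overline F}(\overline F_1)=\H_0(\overline F)=M$, and this identification is visibly compatible with the given augmentation $\overline F_0\twoheadrightarrow M$.

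I do not anticipate any genuine obstacle: the substance is entirely in Lemma~\ref{lemma} (construction of the operators $t^\alpha$) and Theorem~\ref{main} (computation of the homology via the bounded filtration spectral sequence), and the corollary is merely the remark that when the input complex is acyclic with $\H_0=M$ the resulting $Q$-complex is a free resolution of $M$.
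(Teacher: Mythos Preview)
Your proposal is correct and matches the paper's approach: the paper treats this corollary as an immediate consequence of Theorem~\ref{main} and gives no separate proof. Your added details about nonnegativity via Nakayama and the explicit computation of $\partial_1$ are fine but not strictly needed.
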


\begin{remark}\label{Eisops}
We have from Lemma \ref{lemma} that the endomorphisms $t^{e_1},\dots,t^{e_c}$ satisfy
$t^1t^1+\sum f_it^{e_i}=0$, that is, 
\[
(\partial^F)^2=-\sum_{i=1}^c f_it^{e_i}
\]
Thus $t^{e_i}\otimes R$ is nothing more than the negative of the \emph{Eisenbud operator} 
$t_i(Q,\{f_i\},\overline F)$ defined in Section 1 of \cite{Eisenbud}.  One therefore may think of the
$t^\alpha\otimes R$ for $\alpha\in\mathscr B$ with $|\alpha|>1$ as \emph{higher-order} Eisenbud operators.

As is shown in {\it loc. cit.}, each $t^{e_i}\otimes R:\overline F\to\overline F$ is a chain map of degree $-2$. 
We remark that this fact follows directly from (\ref{relation}) when $\gamma=e_i$. Also from (\ref{relation}),
for $\gamma=e_1\wedge e_2$ for example, we have
\[
t^{e_1\wedge e_2}t^1 + t^1t^{e_1\wedge e_2}+t^{e_1}t^{e_2}-t^{e_2}t^{e_1}+
\sum_{i\ne 1,2}(-1)^{[e_i(e_1\wedge e_2)]}f_it^{e_1\wedge e_2\wedge e_i}=0
\]
From which it follows that $t^{e_1}\otimes R$ and $t^{e_2}\otimes R$ commute with each other up to homotopy,
with $t^{e_1\wedge e_2}$ being the homotopy.  the relations (\ref{relation}) show the same holds for all $t^{e_i}$ and $t^{e_j}$; this is \cite[Corollary 1.5]{Eisenbud}. 
\end{remark}

\section{Codimension one}

When $c=1$, in other words, when $R=Q/(f)$ for a single nonzerodivisor $f$, the complex over $Q$ we construct has a particularly simple form, and isomorphic incarnations of it have appeared in recent articles \cite{Steele}, 
\cite{BerghJorgensen}, \cite{AtkinsVraciu}. It is described as follows.

In each degree, say $n$, the complex $F\otimes_QK$ has just two summands, $F_{n-1}\otimes_QK_1$ and 
$F_n\otimes_QK_0$.  Since $K_1=Re$ and $K_0=R$ are rank one free modules, we may identify these two summands with $F_{n-1}$ and $F_n$, respectively.  The differential of $F\otimes_QK$ is just $t^0\otimes s_0+t^1\otimes s_1+t^e\otimes s_e$,
and tracking through the identifications above we see that $F\otimes_QK$ is isomorphic to 
\begin{align}
\cdots\to F_n \oplus F_{n+1} & \xrightarrow{\left[\begin{smallmatrix} \partial^F_n & (-1)^{n+1}t^e \\ 
(-1)^nf & \partial^F_{n+1} \end{smallmatrix}\right]} F_{n-1} \oplus F_n \xrightarrow{\left[\begin{smallmatrix} \partial^F_{n-1} & (-1)^nt^e \\ (-1)^{n-1}f & \partial^F_n \end{smallmatrix}\right]}  F_{n-2} \oplus F_{n-1}\to
\\
&\cdots\to F_1 \oplus F_2 \xrightarrow{\left[\begin{smallmatrix} \partial^F_1 & t^e \\ -f & \partial^F_2 
\end{smallmatrix}\right]}  F_0 \oplus F_1 \xrightarrow{\left[\begin{smallmatrix} f & \partial^F_1 
\end{smallmatrix}\right]}  F_0 \to 0  \nonumber
\end{align} 

\subsection*{Minimality} We say that a complex of finitely generated free $Q$-modules $(C,\partial^C)$ is
\emph{minimal} if $\Im\partial_n^C\subseteq\n C_{n-1}$ for all $n$.  Assuming that $\overline F$ is taken 
minimal, the question of whether $F\otimes_QK$ is minimal then boils down to whether $\Im t^e\subseteq\n F$.
One simple way this can happen is if $t^e_n$ is actually zero for all $n$:

\begin{discussion} Let $R=Q/(f)$ where $f$ is a nonzerodivisor of $Q$.  Let $\widetilde M$ be a maximal 
Cohen-Macaulay $Q$-module, and $F$ a minimal free resolution of $\widetilde M$ over $Q$.  Then 
$\overline F=F\otimes_QR$ is a minimal free resolution of $M=\widetilde M\otimes_QR$ over $R$. Choosing 
the lifting $F$ of $\overline F$ to $Q$, we see that $(\partial ^F)^2=0$, so that $t^e_n=0$ for all $n$.
It follows that $F\otimes_QK$ is minimal, in fact, it has the form
\[
\cdots\to F_2\oplus F_3 \xrightarrow{\left[\begin{smallmatrix} \partial^F_2 & 0 \\ f & \partial^F_3 \end{smallmatrix}\right]} F_1\oplus F_2 \xrightarrow{\left[\begin{smallmatrix} \partial^F_1 & 0 \\ -f & \partial^F_2 \end{smallmatrix}\right]} F_0\oplus F_1 
\xrightarrow{\left[\begin{smallmatrix}  f & \partial^F_1 \end{smallmatrix}\right]} F_0 \to 0
\]
(In this case we say that $M$ \emph{lifts} to $Q$.)

On the other hand, quite often $\Im t^e\nsubseteq\n F$. for example, assume that $Q$ is a regular local ring, 
and $f\in\n^2$, so that $R$ is a singular hypersurface ring.  Let $M$ be a maximal Cohen-Macaulay $R$-module.  
Then as described in \cite{Eisenbud}, a free resolution $\overline F$ of $M$ over $R$ can be chosen periodic of period $\le 2$, with lifting $F$ to $Q$ such that the maps $\partial^F$ comprise a matrix factorization of $f$.
It follows that the $t^e$ are identity maps, and so $F\otimes_QK$ is not minimal.  In fact, it is an infinite resolution over a regular local ring, and so cannot be minimal by Hilbert's Syzygy Theorem.

\end{discussion}

\section{Rank} In this section we assume that $R=Q/(f_1,\dots,f_c)$ for a regular sequence
$f_1,\dots,f_c$ of length $c$. Let $d$ denote the dimension of $Q$. 

\begin{discussion}
We see from the construction of $F\otimes_QK$ the following comparison between the 
ranks of the free modules of $\overline F$ versus those of $F\otimes_QK$:
\begin{equation}\label{binform}
\rank_Q(F\otimes_QK)_n=\sum_{i=0}^c {c \choose i} \rank_R\overline F_{n-i}
\end{equation}
Summing up we get
\begin{equation}\label{rank}
\sum_{n\ge 0}\rank_Q(F\otimes_QK)_n=\sum_{n\ge 0}\left(\sum_{i=0}^c{c\choose i}\rank_R\overline F_{n-i}\right)=2^c\sum_{n\ge 0}\rank_R\overline F_n
\end{equation}
Of course this equality is only interesting when $\overline F$ is a finite complex.  In this case we have the following application.  First we recall a well-known conjecture.
\end{discussion}

Buchsbaum and Eisenbud \cite[Proposition 1.4]{BuchsbaumEisenbud}, and Horrocks
\cite[Problem 24]{Hartshorne} conjectured that a free resolution $F$ of a nonzero module of finite length over a local ring $R$ of dimension $d$ satisfies $\rank_R F_i \ge {d \choose i}$ for all $i$. The weaker statement
that $\sum_{i\ge 0}\rank_R F_i \ge 2^d$ was later conjectured by Avramov \cite[pp. 63]{EvansGriffith}. We will refer to this latter conjecture as the \emph{total rank conjecture}; it was recently proved by Walker 
\cite[Theorem 1]{Walker} when R is a complete intersection whose residual characteristic is not two, and also when R is any local ring containing a field of positive characteristic not equal to two.  In our application below, we show that the conjecture holds modulo nonzerodivisors.

\subsection*{Application to rank conjectures}

\begin{theorem} Let $R=Q/(f)$, where $Q$ be a commutative local ring of dimension $d$, and $f$ is a nonzerodivisor contained in the maximal ideal of $Q$.  The total rank conjecture holds for $R$ if it does so for $Q$.
\end{theorem}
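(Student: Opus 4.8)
The plan is to deduce the total rank conjecture for $R$ directly from the one for $Q$, using the construction of Theorem~\ref{main} in the codimension-one case $c=1$ together with the rank identity \eqref{rank}.

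First I would record the dimension count: since $f$ is a nonzerodivisor lying in the maximal ideal of $Q$, the ring $R=Q/(f)$ is local of dimension $d-1$. (Krull's principal ideal theorem gives $\dim R\ge d-1$, while $f$ avoids every minimal prime of $Q$, so $\dim R\le d-1$.) Thus the statement to be proved for $R$ is that every resolution of a nonzero finite-length $R$-module by finitely generated free modules has total rank at least $2^{d-1}$.

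Now let $M$ be an arbitrary nonzero $R$-module of finite length, and let $(\overline F,\partial^{\overline F})$ be a resolution of $M$ by finitely generated free $R$-modules (for instance the minimal one). Choose a lifting $F$ of $\overline F$ to $Q$ as in Section~\ref{Sec:Construction} and form the complex $(F\otimes_QK,\partial)$; by the Corollary to Theorem~\ref{main} this is a resolution of $M$ by finitely generated free $Q$-modules. Since the $Q$-submodules of $M$ coincide with its $R$-submodules, $M$ has the same length over $Q$ as over $R$, so $M$ is a nonzero finite-length $Q$-module. Applying the total rank conjecture for the $d$-dimensional ring $Q$ to the $Q$-free resolution $F\otimes_QK$ gives $\sum_n\rank_Q(F\otimes_QK)_n\ge 2^d$. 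On the other hand, specializing \eqref{rank} to $c=1$ yields $\sum_n\rank_Q(F\otimes_QK)_n=2\sum_n\rank_R\overline F_n$. Combining these, $\sum_n\rank_R\overline F_n\ge 2^{d-1}$, which is precisely the total rank conjecture for $M$ over $R$. As $M$ and $\overline F$ were arbitrary, the total rank conjecture holds for $R$.

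There is no real obstacle here: the argument is a one-line combination of the construction of Section~\ref{Sec:Construction} and the counting formula \eqref{rank}. The only points needing a moment of care are the identification $\dim R=d-1$ (so that the target bound is $2^{d-1}$ rather than $2^d$) and the observation that a finite-length $R$-module remains a nonzero finite-length $Q$-module; both are routine. The case in which $\overline F$ is infinite is vacuous on the $R$-side, and \eqref{rank} continues to hold there as an equality in $\mathbb{N}\cup\{\infty\}$, so no separate treatment is required.
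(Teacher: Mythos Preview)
Your proof is correct and follows essentially the same approach as the paper: build the $Q$-free resolution $F\otimes_QK$ of $M$ via the construction of Section~\ref{Sec:Construction}, invoke the rank identity \eqref{rank} with $c=1$, and apply the total rank conjecture over $Q$. The paper phrases this by contraposition (a counterexample over $R$ would yield one over $Q$), while you argue directly, and you spell out the routine checks ($\dim R=d-1$, finite length is preserved under restriction to $Q$) that the paper leaves implicit; but the substance is identical.
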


\begin{proof} Suppose that $M$ is an $R$-module with finite length having a finite free resolution $\overline F$ over $R$ with $\sum_{n\ge 0}\rank\overline F_n<2^{d-1}$.  The from (\ref{rank}) we see that $M$ has the finite
free resolution $F\otimes_QK$ over $Q$ with $\sum_{n\ge 0}\rank(F\otimes_QK)_n<2^d$.
\end{proof}

Looking at (\ref{binform}) we have the following refinement.

\begin{proposition}
Assume that $Q$ is a local ring of dimension $d$, and $f_1,\dots,f_c$ a $Q$-regular sequence contained in the maximal ideal of $Q$. Let $\overline F$ be a minimal $R$-free resolution of a finite length $R$-module $M$, and
$F\otimes_QK$ the resolution from Theorem \ref{main}.  If 
$\rank_R\overline F_{n-i}\ge {d-c\choose n-i}$ for $i=0,\dots,c$, then $\rank_Q(F\otimes_QK)_n\ge 
{d\choose n}$.
\end{proposition}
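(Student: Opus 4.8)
The plan is to read off the ranks of $F\otimes_QK$ from \eqref{binform} and then recognize the resulting lower bound as a Chu--Vandermonde convolution. By \eqref{binform} we have
\[
\rank_Q(F\otimes_QK)_n=\sum_{i=0}^c\binom{c}{i}\rank_R\overline F_{n-i},
\]
and since every coefficient $\binom{c}{i}$ is nonnegative, the hypothesis $\rank_R\overline F_{n-i}\ge\binom{d-c}{n-i}$ may be applied term by term to give
\[
\rank_Q(F\otimes_QK)_n\ge\sum_{i=0}^c\binom{c}{i}\binom{d-c}{n-i}.
\]

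It then suffices to evaluate the right hand side. Using the convention that $\binom{m}{j}=0$ for $j<0$ or $j>m$, the summand vanishes outside $0\le i\le c$, so extending the range of summation over all integers $i$ does not change the value, and the Chu--Vandermonde identity
\[
\sum_{i}\binom{c}{i}\binom{d-c}{n-i}=\binom{d}{n}
\]
completes the argument. Along the way I would note that the hypothesis is automatically consistent at the ends of the range: when $n-i<0$ one has $\overline F_{n-i}=0$ and $\binom{d-c}{n-i}=0$, so the inequality is vacuous there. I would also remark that minimality of $\overline F$ is not genuinely used; it only ensures that the ranks appearing are honest invariants of $M$.

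There is no real obstacle here: the only thing to get right is the index bookkeeping in the Vandermonde identity, together with the observation that $\dim R=d-c$ (because $f_1,\dots,f_c$ is a $Q$-regular sequence in the maximal ideal), which is precisely why the family of inequalities $\rank_R\overline F_{n-i}\ge\binom{d-c}{n-i}$ is the Buchsbaum--Eisenbud--Horrocks prediction for the finite length $R$-module $M$ over the dimension-$(d-c)$ ring $R$. In this light the proposition says exactly that the construction of Theorem \ref{main} transports the Buchsbaum--Eisenbud--Horrocks inequalities from $R$ to $Q$.
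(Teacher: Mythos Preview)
Your proof is correct and follows the same route as the paper: apply the rank formula \eqref{binform}, use the hypothesis termwise, and invoke the Vandermonde identity $\sum_i\binom{c}{i}\binom{d-c}{n-i}=\binom{d}{n}$. The additional remarks about the boundary cases, the irrelevance of minimality, and the interpretation via $\dim R=d-c$ are accurate but go beyond what the paper records.
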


\begin{proof}
From (\ref{binform}) we have  
\[
\rank_Q(F\otimes_QK)_n=\sum_{i=0}^c {c \choose i} \rank_R\overline F_{n-i}\ge \sum_{i=0}^c {c \choose i}{d-c\choose n-i}={d\choose n}
\]
where the last equality is a standard identity.
\end{proof}

\section{Totally acyclic complexes}

Our interest in this project is due to the adjoint pair of functors 
\[
\xymatrixrowsep{2pc}
\xymatrixcolsep{3pc}
\xymatrix{
\Ktac(Q) \ar@{->}[r]<0.5 ex>^{S} & \Ktac(R) \ar@{->}[l]<0.5 ex>^{T}
}
\]
defined in \cite{BerghJorgensenMoore}, where $\Ktac(Q)$ is the homotopy category of totally acyclic
complexes over $Q$, and $\Ktac(R)$ is the homotopy category of those over $R$. The main point of 
\emph{loc. cit.} is that the adjoint pair of functors provides approximations of elements in 
$\Ktac(R)$ by those in the image of the functor $S$.  Specifically, for $C\in\Ktac(R)$ we have 
a morphism $\epsilon_C: STC \to C$, and this morphism is the approximation.  The functor $T$ is defined
on objects as follows: for $C\in\Ktac(R)$, $TC$ is a complete resolution of $\Im\partial_0^C$. The functor
$S$ is simply the base change functor $\underline{\quad}\otimes_QR$. The connection
between \cite{BerghJorgensenMoore} and the current project is expressed in the following theorem. 

\begin{theorem}
Assume that $R=Q/(f_1,\dots,f_c)$, where $f_1,\dots,f_c$ is a $Q$-regular sequence, and let $C\in\Ktac(R)$.  Letting $\overline F=C$, the complex $F\otimes_QK$ defined above is $TC$.  That is, $F\otimes_QK$ is a complete resolution of $\Im\partial_0^C$. 
\end{theorem}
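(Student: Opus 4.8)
The plan is to verify that $F\otimes_Q K$ is a complete resolution of $M:=\Im\partial_0^C$ over $Q$ and then invoke uniqueness of complete resolutions. By Theorem~\ref{main} with $\overline F=C$, the complex $F\otimes_Q K$ consists of finitely generated free $Q$-modules and satisfies $\H_*(F\otimes_Q K)\cong\H_*(C)$; since $C$ is totally acyclic it is in particular acyclic, so $F\otimes_Q K$ is acyclic. It thus remains to check: (i) $\Hom_Q(F\otimes_Q K,Q)$ is acyclic, i.e.\ $F\otimes_Q K\in\Ktac(Q)$; and (ii) $F\otimes_Q K$ coincides, in all sufficiently large homological degrees, with a $Q$-free resolution of $M$. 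Granting (i) and (ii), $F\otimes_Q K$ is a complete resolution of $M=\Im\partial_0^C$; since $TC$ is by definition a complete resolution of the same module, and complete resolutions of a fixed module are unique up to isomorphism in $\Ktac(Q)$, we conclude $F\otimes_Q K\cong TC$.

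For (i): since $C$ is a totally acyclic complex of finitely generated free $R$-modules, so is its $R$-dual $C^\vee:=\Hom_R(C,R)$, and in particular $\H_*(C^\vee)=0$. Put $F^\vee:=\Hom_Q(F,Q)$, a complex of finitely generated free $Q$-modules lifting $C^\vee$ (one has $F^\vee\otimes_Q R\cong\Hom_R(C,R)$, with the dual differential reducing to that of $C^\vee$). The Koszul complex $K$ on a regular sequence is self-dual, $\Hom_Q(K,Q)\cong\s^{-c}K$ up to a sign twist, so $\Hom_Q(F\otimes_Q K,Q)\cong\Hom_Q(F,Q)\otimes_Q\Hom_Q(K,Q)\cong\s^{-c}(F^\vee\otimes_Q K)$ as graded $Q$-modules. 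The point is that under this identification the dual differential corresponds, after a routine re-signing, to the perturbed differential $\sum_{\alpha\in\mathscr B}(t^\alpha)^\vee\otimes s_\alpha$, and the transposed operators $(t^\alpha)^\vee$ — suitably re-signed — satisfy the relations \eqref{relation} for the lifted data $(F^\vee,(\partial^F)^\vee)$: transposing \eqref{relation}, and using that the $f_i$ are central, again yields a relation of that shape. Hence $\Hom_Q(F\otimes_Q K,Q)$ is, up to shift, the complex produced by Theorem~\ref{main} from $\overline F=C^\vee$, so Theorem~\ref{main} gives that its homology is $\H_*(C^\vee)=0$. Thus $F\otimes_Q K$ is totally acyclic over $Q$.

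For (ii): use the bounded filtration $\cdots\subseteq\mathscr F^{p-1}\subseteq\mathscr F^{p}\subseteq\cdots$ of $F\otimes_Q K$ from the proof of Theorem~\ref{main}, with $\mathscr F^p=\coprod_{i\le p}F_i\otimes_Q K$. An index count gives $\mathscr F^{-1}_n=0$ for $n\ge c$, so the quotient complex $P:=(F\otimes_Q K)/\mathscr F^{-1}\cong\coprod_{i\ge 0}F_i\otimes_Q K$ is concentrated in nonnegative degrees and agrees with $F\otimes_Q K$ in every degree $\ge c$. The induced filtration on $P$ gives the spectral sequence of the proof of Theorem~\ref{main} restricted to $p\ge 0$: it collapses to the $p$-axis with $E^2_{p,0}\cong\H_p(C_{\ge 0})$, where $C_{\ge 0}=(\cdots\to C_1\to C_0\to 0)$ is the truncation of $C$. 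Since $C$ is acyclic, $\H_0(C_{\ge 0})\cong\Im\partial_0^C=M$ and $\H_{>0}(C_{\ge 0})=0$. Hence $P$ is a complex of finitely generated free $Q$-modules, concentrated in nonnegative degrees, with $\H_*(P)\cong M$ in degree $0$ — that is, a $Q$-free resolution of $M$. (Equivalently, $P$ is itself the complex produced by Theorem~\ref{main} from the $R$-free resolution $C_{\ge 0}$ of $M$, with lifting $F_{\ge 0}$ and operators $t^\alpha\bmod F_{<0}$, so its Corollary applies directly.) This is exactly (ii), and the proof is complete.

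The main obstacle is (i), and within it the bookkeeping needed to match the $Q$-dual of $\partial=\sum t^\alpha\otimes s_\alpha$ with the perturbed differential built from the transposed operators: one must reconcile the signs coming from the self-duality isomorphism of the Koszul complex with the signs in \eqref{relation} — precisely the kind of computation carried out in the proof of Lemma~\ref{lemma}. The remaining ingredients (the vanishing of $\mathscr F^{-1}$ in large degrees, the restriction of the spectral sequence, and uniqueness of complete resolutions) are routine.
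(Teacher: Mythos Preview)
The paper states this theorem without proof, so there is nothing to compare against; your proposal supplies the missing argument, and its outline is correct. Acyclicity of $F\otimes_QK$ is immediate from Theorem~\ref{main}; your (ii) is clean --- the index check $\mathscr F^{-1}_n=0$ for $n\ge c$ is right, and the quotient $P=(F\otimes_QK)/\mathscr F^{-1}$ is precisely the construction applied to the $R$-free resolution $C_{\ge0}$ of $M$ (with lifting $F_{\ge0}$ and operators $t^\alpha\bmod F_{<0}$), so the Corollary to Theorem~\ref{main} applies; and uniqueness of complete resolutions in $\Ktac(Q)$ closes the argument.

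The one place that deserves more care is (i). Your plan is to exhibit $\Hom_Q(F\otimes_QK,Q)$, up to shift and re-signing, as an instance of the paper's construction for $\overline F=C^\vee$, so that Theorem~\ref{main} applies verbatim. This is morally right, but the bookkeeping is real: dualizing $t^\alpha\otimes s_\alpha$ produces $(t^\alpha)^\vee\otimes(s_\alpha)^\vee$, and under Koszul self-duality $(s_\alpha)^\vee$ matches $s_\alpha$ only up to a degree-dependent sign, while transposing \eqref{relation} reverses the order of composition and introduces further signs that do not obviously reassemble into \eqref{relation}. A cleaner route sidesteps this entirely: run the spectral sequence of Theorem~\ref{main} directly on the dual. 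Filtering $\Hom_Q(F\otimes_QK,Q)$ by the duals of the $\mathscr F^p$ gives associated graded pieces $\Hom_Q(F_p,Q)\otimes_Q\Hom_Q(K,Q)$ with $d^0=\Id\otimes(\partial^K)^\vee$; since $f_1,\dots,f_c$ is regular, $\Hom_Q(K,Q)$ has homology $R$ concentrated in a single degree, so $E^1_{p,*}\cong\Hom_R(\overline F_p,R)$ with $d^1=(\partial^{\overline F})^\vee$, whence $E^2=\H_*(C^\vee)=0$. The filtration is bounded in each total degree, so convergence gives total acyclicity without ever asking whether the transposed operators satisfy \eqref{relation}.
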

It follows that $STC$ is $(F\otimes_QK)\otimes_QR$, and the morphism
\[
\epsilon_C:STC\to C
\]
is the map that projects the copy $\overline F\otimes_Q K_0$ of $\overline F$ in $(F\otimes_QK)\otimes_QR$
onto $\overline F=C$.

We illustrate the whole affair with an example.

\begin{example} Let $R=k[x,y]/(x^2,y^2)$, and $C$ be the totally acyclic $R$-complex with 
$\Im\partial^C_0=Rxy\cong k$:
\[
C=\overline F: \cdots \to R^3 \xrightarrow{\left[\begin{smallmatrix} x & 0 & -y \\ 0 & y & x \end{smallmatrix}\right]} R^2 \xrightarrow{\left[\begin{smallmatrix} x & y \end{smallmatrix}\right]} R \xrightarrow{\left[\begin{smallmatrix} xy \end{smallmatrix}\right]} R \xrightarrow{\left[\begin{smallmatrix} x \\ y \end{smallmatrix}\right]}R^2 \to\cdots
\]
A lifting of $C$ to a sequence of homomorphisms over $Q=k[x,y]/(x^2)$ is given by
\[
\widetilde C=F: \cdots \to Q^3 \xrightarrow{\left[\begin{smallmatrix} x & 0 & -y \\ 0 & y & x \end{smallmatrix}\right]} Q^2 \xrightarrow{\left[\begin{smallmatrix} x & y \end{smallmatrix}\right]} Q \xrightarrow{\left[\begin{smallmatrix} xy \end{smallmatrix}\right]} Q \xrightarrow{\left[\begin{smallmatrix} x \\ y \end{smallmatrix}\right]}Q^2 \to\cdots
\]
We have 
\begin{align*}
\partial_1^F\partial_2^F=&\left[\begin{matrix} x & y \end{matrix}\right]\left[\begin{matrix} x & 0 & -y \\ 0 & y & x \end{matrix}\right]=\left[\begin{matrix} 0 & y^2 & 0 \end{matrix}\right]=
y^2\left[\begin{matrix} 0 & 1 & 0 \end{matrix}\right]
\\
\partial_0^F\partial_1^F=&\left[\begin{matrix} xy \end{matrix}\right]\left[\begin{matrix} x & y \end{matrix}\right]=\left[\begin{matrix} 0 & xy^2 \end{matrix}\right]=
y^2\left[\begin{matrix} 0 & x \end{matrix}\right]
\\
\partial_{-1}^F\partial_0^F=&\left[\begin{matrix} x \\ y \end{matrix}\right]\left[\begin{matrix} xy \end{matrix}\right]=\left[\begin{matrix} 0 \\ xy^2 \end{matrix}\right]=
y^2\left[\begin{matrix} 0 \\ x \end{matrix}\right]
\end{align*}
We see that $t_2=\left[\begin{matrix} 0 & -1 & 0 \end{matrix}\right]$, $t_1=\left[\begin{matrix} 0 & -x \end{matrix}\right]$, and $t_0=\left[\begin{matrix} 0 \\ -x \end{matrix}\right]$, where we have written $t$ to represent $t^e$.  Therefore $TC$ is
\[
\cdots\to Q^2\oplus Q^3 \xrightarrow{\left[\begin{smallarray}{cc|ccc}  x & y & 0 & -1 & 0 \\ \hline -y^2 & 0 & x & 0 & -y \\ 0 & -y^2  &  0 & y & x \end{smallarray}\right]} Q^1\oplus Q^2 \xrightarrow{\left[\begin{smallarray}{c|cc} xy  & 0 & x  \\ \hline y^2 &  x & y \end{smallarray}\right]} Q^1\oplus Q^1
\xrightarrow{\left[\begin{smallarray}{c|c} x & 0 \\ y & -x \\ \hline -y^2 & xy \end{smallarray}\right]} Q^2 \oplus Q^1 \to \cdots
\]
This section of the right approximation $\epsilon_C:STC \to C$ thus takes the form
\[
\xymatrixrowsep{3pc}
\xymatrixcolsep{6pc}
\xymatrix{
R^2\oplus R^3\ar@{->}[r]^{\left[\begin{smallarray}{cc|ccc}  x & y & 0 & -1 & 0 \\ \hline -y^2 & 0 & x & 0 & -y \\ 0 & -y^2  &  0 & y & x \end{smallarray}\right]}\ar@{->}[d]^{\left(\begin{smallmatrix}  0 & 1 \end{smallmatrix}\right)} & R\oplus R^2 \ar@{->}[d]^{\left(\begin{smallmatrix}  0 & 1 \end{smallmatrix}\right)} \ar@{->}[r]^{\left[\begin{smallarray}{c|cc} xy  & 0 & x  \\ \hline y^2 &  x & y \end{smallarray}\right]} & R\oplus R
\ar@{->}[d]^{\left(\begin{smallmatrix}  0 & 1 \end{smallmatrix}\right)} \ar@{->}[r]^{\left[\begin{smallarray}{c|c} x & 0 \\ y & -x \\ \hline -y^2 & xy \end{smallarray}\right]} & R^2\oplus R \ar@{->}[d]^{\left(\begin{smallmatrix}  0 & 1 \end{smallmatrix}\right)} 
\\
R^3\ar@{->}[r]_{\left[\begin{smallmatrix} x & 0 & -y \\ 0 & y & x \end{smallmatrix}\right]} & R^2 \ar@{->}[r]_{\left[\begin{smallmatrix} x & y \end{smallmatrix}\right]} & R \ar@{->}[r]_{\left[\begin{smallmatrix} xy \end{smallmatrix}\right]} & R 
}
\]
\end{example}

\end{document}